\tikzstyle{white}=[circle,draw=black!100,fill=white!100,thick,inner sep=0pt,minimum size =2mm]
\tikzstyle{blank}=[circle,draw=white!100,fill=white!100,thick,inner sep=0pt,minimum size =2mm]
\newtheorem{Theorem}{Theorem}[section]
\theoremstyle{plain}
\newtheorem{Thm}[Theorem]{Theorem}
\newtheorem{Lem}[Theorem]{Lemma}
\newtheorem{Cor}[Theorem]{Corollary}
\theoremstyle{definition}
\newtheorem{Defn}[Theorem]{Definition}
\newtheorem{Exam}[Theorem]{Example}
\newtheorem{Rem}[Theorem]{Remark}
\def\NN{\mathbb N}
\def\RR{\mathbb R}
\def\SS{\mathbb S}
\def\11{\mathbbm{1}}
\newcommand{\seq}[1]{\left\{#1\right\}}
\newcommand{\bra}[1]{\left(#1\right)}
\newcommand{\ip}[2]{\left\langle #1, #2 \right\rangle}
\newcommand{\abs}[1]{\left|#1\right|}
\renewcommand{\epsilon}{\varepsilon}
\def\diam{\text{diam}}
\def\diam{\text{diam}}
\renewcommand*\env@matrix[1][*\c@MaxMatrixCols c]{%
  \hskip -\arraycolsep
  \let\@ifnextchar\new@ifnextchar
  \array{#1}}
\begin{document}

\title{On the supremal $p$-negative type of a finite metric space}
\author{Stephen S\'{a}nchez\footnote{The University of New South Wales, Sydney, Australia \newline email: Stephen.Sanchez@unsw.edu.au}}
\date{}

\maketitle{}

\begin{abstract}
We study the supremal $p$-negative type of finite metric spaces. An explicit expression for the supremal $p$-negative type $\wp \bra{X,d}$ of a finite metric space $\bra{X,d}$ is given in terms its associated distance matrix, from which the supremal $p$-negative type of the  space may be calculated. The method is then used to give a straightforward calculation of the supremal $p$-negative type of the complete bipartite graphs $K_{n,m}$ endowed with the usual path metric. A gap in the spectrum of possible supremal $p$-negative type values of path metric graphs is also proven.
\end{abstract}


\section{Introduction and Definitions}

The notions of negative type and generalized roundness have been used extensively in metric geometry to investigate various questions of embeddability. An early example is Shoenberg's classical result that a metric space is isometric to a subset of a Euclidean space if and only if it has 2-negative type \cite{Sch37}. This was later generalized to $L_p$ spaces by Bretagnolle, Dacunha-Castelle and Krivine \cite{BDCK67}, who showed that for $0< p \leq 2$, a real normed space is linearly isometric to a subset of some $L_p$ space if and only if it has $p$-negative type. In a different direction, Enflo \cite{Enf69} introduced the ideas of roundness and generalized roundness to answer in the negative a question of Smirnov: ``Is every separable metric space uniformly homeomorphic to a subset of $L_2[0,1]$?'' More recently, using an argument based on Enflo's ideas, Dranishnikov, Gong, Lafforgue and Yu \cite{DGLY02} showed that the answer to Smirnov's question is still negative even if one allows coarse embeddings, answering a question of Gromov.

Given the usefullness of negative type and generalized roundness, it is natural that one would want to calculate the supremal $p$-negative type (or maximal generalized roundness) for a given metric space $(X,d)$. Until now, this has remained a hard problem, even for spaces of relatively few points. The main result of this paper (Corollary~\ref{value}) shows that it is possible to calculate the supremal $p$-negative type for a given finite metric space, and provides a method for doing so. We first recall some work on negative type and generalized roundness, and then detail the main results.

In 1997 Lennard, Tonge and Weston \cite{LTW97} showed that the notions of negative type and generalized roundness coincide: a metric space $\bra{X,d}$ has $p$-negative type if and only if it has generalized roundness $p$. This has allowed a wider variety of techniques to be used in investigations of negative type and generalized roundness. We now define these concepts.

\begin{Defn}
Let $(X,d)$ be a metric space and $p \geq 0$. Then:
\begin{enumerate}[(i)]
 \item $(X,d)$ has \emph{$p$-negative type} if and only if for all natural numbers $k \geq 2$, all finite subsets $\seq{x_1,\ldots,x_k} \subset X$, and all choices of real numbers $\alpha_1, \ldots, \alpha_k$ with $\alpha_1 + \cdots + \alpha_k = 0$, we have:
\begin{equation}\label{type}
\sum_{1 \leq i, j \leq k} d(x_p,x_j)^p \alpha_i \alpha_j \leq 0.
\end{equation}
 \item $(X,d)$ has \emph{strict $p$-negative type} if and only if it has $p$-negative type and the inequalities \eqref{type} are all strict except in the trivial case $\bra{\alpha_1,\ldots, \alpha_k} = \bra{0, \ldots, 0}$.
\end{enumerate}
\end{Defn}

It is well known (see \cite[p.~11]{WW75}) that negative type (and so also generalized roundness) possesses the following interval property: if a metric space $\bra{X,d}$ has $p$-negative type, then it has $q$-negative type for all $0 \leq q \leq p$. We may thus define the \emph{supremal $p$-negative type} $\wp \bra{X,d}$ of a metric space $\bra{X,d}$ by
\[
\wp \bra{X,d} = \sup \seq{p : (X,d) \text{ has } p\text{-negative type}}.
\]
Moreover if $\wp(X,d)$ is finite then it is easy to see that $(X,d)$ does actually have $\wp(X,d)$-negative type. We write $\wp(X)$ when the metric is clear, and simply $\wp$ if the metric space is clear from context. 

Rather than give the original definition of generalized roundness, we will present an equivalent reformulation that is due to Lennard, Tonge and Weston \cite{LTW97} and Weston \cite{Wes95}. We first require some terminology.

\begin{Defn}
Let $s,t$ be arbitrary natural numbers and $X$ be any set. A \emph{normalized $(s,t)$-simplex in X} is a vector $\bra{a_1,\ldots, a_s, b_1, \ldots, b_t} \in X^{s+t}$ of $s+t$ distinct points, along with a load vector $\omega = \bra{m_1,\ldots, m_s,n_1,\ldots,n_t} \in \RR^{s+t}_+$ that assigns a positive weight $m_j >0$ or $n_i>0$ to each point $a_j$ or $b_i$ respectively and satisfies the following two normalizations
\[
m_1 + \cdots + m_s = 1 = n_1 + \cdots + n_t.
\]
Such a normalized $(s,t)$-simplex will be denote by $[a_j\bra{m_j};b_i \bra{n_i}]_{s,t}$.
\end{Defn}

Using the above we can now define the generalized roundness of a metric space $(X,d)$.

\begin{Defn}\label{genround}
 Let $(X,d)$ be a metric space and $p \geq 0$. Then:
\begin{enumerate}[(i)]
 \item $(X,d)$ has \emph{generalized roundness $p$} if and only if for all $s,t \in \NN$ and all normalized $(s,t)$-simplices $\left[a_j \bra{m_j} ; b_i \bra{n_i} \right]_{s,t}$ in $X$, we have
\begin{equation}\label{roundness}
\sum_{1 \leq j_i < j_2 \leq s} m_{j_1} m_{j_2} d \bra{a_{j_1}, a_{j_2}}^p + \sum_{1 \leq i_1 < i_2 \leq t} n_{i_1} n_{i_2}d \bra{b_{i_1}, b_{i_2}}^p \leq \sum_{j,i = 1}^{s,t} m_j n_i d \bra{a_j, b_i}^p. 
\end{equation}
\item $(X,d)$ has \emph{strict generalized roundness $p$} if and only if it has generalized roundness $p$ and the associated inequalities \eqref{roundness} are all strict.
\end{enumerate}
\end{Defn}

For consistency, we will phrase everything in terms of $p$-negative type, though referring to normalized $(s,t)$-simplices occasionally. The formulation in Definition \ref{genround} has the advantage that  \eqref{roundness} can be interpreted geometrically: the sum of the $p$-distances from the $a_j$ to the $b_i$ is at least as big as the sums of the $p$-distances within the $a_j$ and the $b_i$, all with the appropriate weights.

For some metric spaces, it is not too difficult to calculate their $p$-negative type, such as $L_p[0,1]$ with $0 < p \leq 2$ (in which case we have $\wp\bra{L_p} =p$, see \cite[Corollary 2.6]{LTW97}). For less homogeneous metric spaces, such as finite metric trees, their supremal $p$-negative type may be more obscure.

The $p$-negative type qualities of finite metric spaces in particular have been investigated. In \cite{DM90}, Deza and Maehara determine the supremal $p$-negative type for a finite graph $G$ endowed with the `truncated metric' in terms of the characteristic polynomials of $G$ and $\bar{G}$ (though their results were not cast in terms of negative type). In \cite{Wes95}, Weston showed that the supremal $p$-negative type of a finite metric space $\bra{X,d}$ may be bounded below by a function depending only upon the number of points in the metric space. He later improved upon this in \cite{Wes09}, in which he showed that for a finite metric space $\bra{X,d}$ with $n$ points and scaled diameter $\mathfrak{D} = \bra{\diam\,X}/ \min \seq{d(x,y): x \neq y}$, we have
\[
\wp \bra{X} \geq  \frac{ \ln \bra{  1 / \bra{1- \Gamma} }}{ \ln \mathfrak{D} } \quad \text{where} \quad \Gamma = \frac{1}{2} \bra{\frac{1}{\lceil n/2 \rceil} + \frac{1}{\lfloor n/2 \rfloor}},
\]
and that the bound is sharp for spaces in which $\mathfrak{D} \in [1,2]$.

Other investigations have focused on more specific classes of metric spaces. In \cite{HLMT98}, Hjorth, Lison\u{e}k, Markvorsen and Thomassen showed that metric spaces of two, three or four points, as well as metric trees, and any finite subsets of the sphere $\SS^n$ that do not contain two pairs of antipodal points, all have strict $1$-negative type. Later, in \cite{HKM01}, Hjorth, Kokkendorff and Markvorsen showed that finite subspaces of hyperbolic spaces are of strictly negative type. Recently, Doust and Weston \cite{DW08a} gave a new proof that finite metric trees have strict 1-negative type, and from this showed that for any finite metric tree $T$ with $n \geq 3$ vertices we have
\[
\wp(T) \geq 1 + \frac{\ln \bra{1+ \frac{1}{(n-1)^3(n-2)}}}{\ln(n-1)}.
\]

Useful upper bounds on the supremal $p$-negative type of finite metric spaces tend to be less general. For example, for $n \geq 2$ the complete graph $K_n$ has strict $p$-negative type for all $p \in [0,\infty)$, showing that no upper bound can depend solely on the number of points in the space. Given a metric space $(X,d)$, one method of bounding $\wp(X)$ from above is to find a normalized $(s,t)$-simplex $[a_j \bra{m_j};b_i \bra{n_i}]_{s,t}$ in $X$ such that \eqref{roundness} fails to hold for some exponent $q>0$, from which it follows that $\wp(X) < q$. How to find normalized $(s,t)$-simplices such that \eqref{roundness} fails for relatively small exponents without resorting to a numerical search has, until now, been a difficult problem. For finite metric spaces with few points and some symmetries, one may sometimes employ optimisation procedures.

In spite of these general results, determining the supremal $p$-negative type $\wp \bra{X,d}$ of a given finite metric space $\bra{X,d}$ has remained a hard problem. Following from the work of \cite{LW10} and \cite{Wol10}, we show that the supremal $p$-negative type of a finite metric space may be recast in terms of matrix properties of distance matrix, allowing one to calculate (at least numerically) the supremal $p$-negative type of a given finite metric space. In particular, we show that (Corollary~\ref{value})
\[
\wp\bra{X,d} = \min \seq{ p \geq 0 : \det \bra{D_p} =0 \text{ or } \ip{D_p^{-1}\11}{\11}=0 },
\]
where $D_p$ is the distance matrix for $\bra{X,d}$ with each entry raised to power $p$.  Using this, we then give a simple proof that the supremal $p$-negative type of the complete bipartite graphs $K_{n,m}$ endowed with the usual path metric is given by (Theorem~\ref{bipartite})
\[
\wp \bra{K_{n,m}} = \log_2 \bra{ \frac{2nm}{2nm -n -m} }.
\]
By using Corollary~\ref{value} to calculate $\wp$ for some small graphs, we also show that (Theorem~\ref{gap}) for any path metric graph $G$, we have $\wp(G) \notin \bra{\log_2 \bra{2 + \sqrt{3}},2}$, and that these bounds are sharp.

\section{The supremal $p$-negative type of a finite metric space}

While the interval property of $p$-negative type has been known for some time, the behaviour of strict $p$-negative type has only recently been determined. In \cite{LW10}, Li and Weston completely classify all intervals on which a metric space may have strict $p$-negative type. Two of their results which we will use are the following.

\begin{Thm}\label{supremal}
The supremal $p$-negative type of a finite metric space cannot be strict.
\end{Thm}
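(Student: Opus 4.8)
The plan is to argue by contradiction: if a finite metric space $(X,d)$ with $n = \abs{X}$ points had strict $p_0$-negative type at $p_0 = \wp(X,d) < \infty$, then it would in fact have $p$-negative type for some $p > p_0$, contradicting the definition of the supremum. (When $\wp(X,d) = \infty$ the statement is vacuous, since no finite-$p$ inequality is being claimed to be attained.)

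First I would recast everything as a statement about a single quadratic form. Enumerate $X = \seq{x_1, \dots, x_n}$, let $D_p$ be the matrix with entries $d(x_i,x_j)^p$, and for $\alpha \in \RR^n$ set $Q_p(\alpha) = \ip{D_p \alpha}{\alpha} = \sum_{1\le i,j\le n} d(x_i,x_j)^p \alpha_i\alpha_j$. Writing any $\alpha$ in the hyperplane $H = \seq{\alpha : \ip{\alpha}{\11}=0}$ as the difference of its positive and negative parts and normalizing, one checks that nonzero vectors of $H$ correspond, up to a positive scalar, precisely to normalized $(s,t)$-simplices, and that $Q_p(\alpha)$ is a positive multiple of the difference of the two sides of \eqref{roundness}. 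Hence $(X,d)$ has $p$-negative type iff $Q_p \le 0$ on $H$, and has strict $p$-negative type iff $Q_p$ is negative definite on $H$, i.e.\ $Q_p(\alpha) < 0$ for every $\alpha \in H \setminus \seq{0}$.

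The heart of the argument is then a compactness-and-continuity step. Let $S = \seq{\alpha \in H : \norm{\alpha} = 1}$, which is compact because $X$ is finite. Assuming strict $p_0$-negative type, $Q_{p_0}$ is negative on the compact set $S$, so $M(p_0) := \max_{\alpha \in S} Q_{p_0}(\alpha) = -\delta$ for some $\delta > 0$. Since $d(x_i,x_j) > 0$ for $i \neq j$, each entry $d(x_i,x_j)^p$ is continuous in $p$, so $(p,\alpha) \mapsto Q_p(\alpha)$ is jointly continuous; consequently $M(p) = \max_{\alpha\in S} Q_p(\alpha)$ is continuous in $p$. As $M(p_0) < 0$, there is $\epsilon > 0$ with $M(p) < 0$ for all $p \in [p_0, p_0+\epsilon]$, which says exactly that $Q_p$ is negative definite on $H$ for such $p$. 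Thus $(X,d)$ has (strict) $p$-negative type for $p = p_0 + \epsilon > p_0$, contradicting $p_0 = \wp(X,d)$.

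The main obstacle---and the place where finiteness is genuinely used---is producing the uniform gap $\delta$ together with the continuity of $p \mapsto M(p)$. For an infinite space the relevant set of load configurations need not be compact and the strict inequalities need not be uniformly bounded away from $0$, so no such $\epsilon$ could be extracted; the finite-dimensional compactness of $S$ is precisely what converts the pointwise strict inequalities into an open condition in $p$. A secondary point to verify is the equivalence of the two notions of strictness (strict negative type versus strict generalized roundness), but this follows directly from the signed-decomposition correspondence described above, so I expect it to be routine rather than an essential difficulty.
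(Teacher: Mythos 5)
Your proof is correct, but note that there is no in-paper argument to compare it against: the paper states Theorem~\ref{supremal} without proof, quoting it from Li and Weston \cite{LW10}. Your compactness-and-perturbation argument is essentially the standard one (and close in spirit to that source). The reduction to the quadratic form $Q_p$ on $\Pi_0$ is exactly the reformulation the paper itself records in \eqref{neg} and \eqref{strict}; strictness at $p_0=\wp$ gives $\max_{\alpha\in S} Q_{p_0}(\alpha)=-\delta<0$ on the compact unit sphere $S$ of $\Pi_0$ (compact precisely because the space, hence the dimension, is finite); joint continuity of $(p,\alpha)\mapsto Q_p(\alpha)$ holds because every off-diagonal distance is strictly positive, so $M(p)=\max_{\alpha\in S}Q_p(\alpha)$ is continuous and $M(p)<0$ persists on some $[p_0,p_0+\epsilon]$; and homogeneity $Q_p(c\alpha)=c^2Q_p(\alpha)$ upgrades negativity on $S$ to negativity on all of $\Pi_0\setminus\seq{0}$, yielding $p$-negative type for $p=p_0+\epsilon>\wp$, the desired contradiction. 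Your closing diagnosis is also apt: finiteness enters exactly through the compactness of $S$, which is what converts the pointwise strict inequalities into an open condition in $p$; for infinite spaces the result genuinely fails. The one point you defer --- the equivalence of strict negative type with strict generalized roundness --- is indeed routine and, in any case, not needed for the argument as written, since everything is phrased in terms of negative type.
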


\begin{Thm}\label{strictthm}
Let $(X,d)$ be a metric space. If $(X,d)$ has $p$-negative type for some $p>0$, then it must have strict $q$-negative type for all $q$ such that $0 \leq q <p$.
\end{Thm}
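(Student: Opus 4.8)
The plan is to deduce strict $q$-negative type from $p$-negative type by means of the classical integral representation of the power function together with Schoenberg's exponential criterion. Fix $0 < q < p$; the boundary case $q=0$ is immediate, since for any nonzero load with $\sum_i\alpha_i = 0$ one has $\sum_{i \neq j}\alpha_i\alpha_j = -\sum_i \alpha_i^2 < 0$. By definition I must show that for every finite set of \emph{distinct} points $x_1,\ldots,x_k \in X$ and every nonzero $\alpha \in \RR^k$ with $\alpha_1 + \cdots + \alpha_k = 0$,
\[
Q_q(\alpha) := \sum_{1 \le i,j \le k} d(x_i,x_j)^q\, \alpha_i \alpha_j < 0.
\]

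First I would record the representation, valid for $s > 0$ and $r := q/p \in (0,1)$,
\[
s^{r} = \frac{1}{K_r}\int_0^\infty \frac{1 - e^{-su}}{u^{1+r}}\, du, \qquad K_r := \int_0^\infty \frac{1 - e^{-v}}{v^{1+r}}\, dv \in (0,\infty),
\]
which follows from the substitution $v = su$. Applying it with $s = d(x_i,x_j)^p$, so that $s^{r} = d(x_i,x_j)^q$, summing against $\alpha_i\alpha_j$, and using $\sum_i \alpha_i = 0$ to annihilate the constant term $\sum_{i,j}\alpha_i\alpha_j = (\sum_i \alpha_i)^2 = 0$, I obtain
\[
Q_q(\alpha) = -\frac{1}{K_r}\int_0^\infty \frac{\phi(u)}{u^{1+r}}\, du, \qquad \phi(u) := \sum_{1 \le i,j \le k} e^{-u\, d(x_i,x_j)^p}\, \alpha_i \alpha_j.
\]
Interchanging the finite sum and the integral is harmless, and convergence holds because $\phi(u) = O(u)$ as $u \to 0^+$ (since $\phi(0) = \sum_{i,j}\alpha_i\alpha_j = 0$) while $\phi$ stays bounded as $u \to \infty$, so the integrand is $O(u^{-r})$ near $0$ and $O(u^{-1-r})$ near $\infty$.

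Next I would invoke the hypothesis. Since $(X,d)$ has $p$-negative type, the matrix $\left[ d(x_i,x_j)^p \right]$ is conditionally negative definite, and Schoenberg's theorem then guarantees that $\left[ e^{-u\, d(x_i,x_j)^p} \right]$ is positive semidefinite for every $u > 0$; hence $\phi(u) \ge 0$ for all $u > 0$, which already re-proves $q$-negative type. The crux is \emph{strictness}: I must show the nonnegative integrand is not almost everywhere zero. Grouping terms by the distinct values $0 = c_0 < c_1 < \cdots < c_m$ taken by $d(x_i,x_j)^p$ gives $\phi(u) = \sum_{\ell=0}^m w_\ell e^{-u c_\ell}$, and because the points are distinct the value $c_0 = 0$ arises solely from the diagonal, so $w_0 = \sum_i \alpha_i^2 > 0$. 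As the functions $u \mapsto e^{-u c_\ell}$ are linearly independent, $\phi$ cannot vanish identically; being real-analytic and nonnegative, it is therefore strictly positive off a set of measure zero, whence the integral is strictly positive and $Q_q(\alpha) < 0$.

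The main obstacle is precisely this last step: the interval property alone yields only $Q_q(\alpha) \le 0$, and one must extract strict positivity of $\int_0^\infty \phi(u)\,u^{-1-r}\,du$. The device that makes it work is the observation that the \emph{diagonal} contribution $w_0 = \sum_i \alpha_i^2$ is strictly positive for any nonzero load, which forces $\phi \not\equiv 0$ irrespective of whether $(X,d)$ enjoys strict $p$-negative type at the top of the interval. Everything else (the representation, the sum--integral interchange, and the convergence estimates) is routine once Schoenberg's positive-semidefiniteness criterion is in hand.
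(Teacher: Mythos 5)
Your proof is correct, but there is nothing internal to compare it against: the paper states Theorem~\ref{strictthm} as an imported result of Li and Weston \cite{LW10} and gives no proof, using it (together with Theorem~\ref{supremal}) only as an ingredient for Corollary~\ref{value}. Your argument is the classical Schoenberg-transform route, and every step checks out: the representation $s^{r}=K_r^{-1}\int_0^\infty(1-e^{-su})u^{-1-r}\,du$ with $r=q/p\in(0,1)$ is valid (and holds trivially at $s=0$, so the diagonal causes no trouble); the interchange of the finite sum with the integral is harmless, and your convergence estimates are right ($\phi(u)=O(u)$ near $0$ because $\phi(0)=(\sum_i\alpha_i)^2=0$, and $\phi$ bounded at infinity); Schoenberg's theorem applies exactly as you say, since $p$-negative type is precisely conditional negative semidefiniteness of $\left[d(x_i,x_j)^p\right]$, giving $\phi(u)\ge 0$ for all $u>0$; and your strictness mechanism --- the diagonal weight $w_0=\sum_i\alpha_i^2>0$, which is available because the $x_i$ are distinct --- is the correct key idea, and is exactly what makes the conclusion hold on all of $[0,p)$ irrespective of whether the $p$-negative type at the endpoint is strict (consistent with Theorem~\ref{supremal}). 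Two minor remarks. First, your real-analyticity/linear-independence step can be shortened: since the off-diagonal exponentials decay, $\phi(u)\to\sum_i\alpha_i^2>0$ as $u\to\infty$, so $\phi$ is bounded below by a positive constant on some half-line $[U,\infty)$ and $\int_0^\infty\phi(u)u^{-1-r}\,du>0$ follows at once. Second, your $q=0$ case silently uses the convention that the diagonal of $D_0$ is zero (i.e.\ $d(x,x)^0=0$); this is indeed the convention in force in this literature, but it deserves a word. What your write-up buys over the paper's bare citation is a self-contained, elementary proof that isolates exactly where strictness below the supremal exponent comes from.
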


Thus, given a finite metric space $\bra{X,d}$, $\wp(X)$ is the smallest $p \geq 0$ such that $\bra{X,d}$ has $p$-negative type but does not have strict $p$-negative type.

We first reformulate the $p$-negative type condition for a finite metric space in terms of its distance matrix. Let $x_1, \ldots, x_n$ be a labelling of the elements of $X$. For $p \geq 0$, we define the \emph{$p$-distance matrix $D_p$} of $(X,d)$ by
\[
D_p = \big[ d(x_i,x_j)^p \big]_{i,j}.
\]
Let $\Pi_0$ denote the hyperplane $\seq{\alpha \in \RR^n : \ip{\alpha}{\11}=0}$, where $\11$ denotes the vector whose entries are all 1 and $\ip{\cdot}{\cdot}$ is the standard inner-product. Then $(X,d)$ has $p$-negative type if and only if 
\begin{equation}\label{neg}
\ip{D_p\alpha}{\alpha} \leq 0 \text{ for all } \alpha \in \Pi_0
\end{equation}
and has strict negative type if and only if
\begin{equation}\label{strict}
\ip{D_p\alpha}{\alpha} < 0 \text{ for all } \alpha \in \Pi_0 - \seq{0}.
\end{equation}
This follows since any sum of the form in \eqref{type} may be obtained by letting the appropriate entries of $\alpha$ be the $\alpha_i$ and the rest $0$. We refer to conditions \eqref{neg} and \eqref{strict} as $D_p$ being of negative type and strict negative type respectively. It is clear that given another labelling $x_1', \ldots, x_n'$ and associated distance matrix $D_p'$ the inequalities \eqref{neg} and \eqref{strict} hold for the same values of $p \geq 0$, and so the specific labelling is unimportant.

The following result of Wolf (see \cite[Theorem~3.1]{Wol10}, where it appears in a more general setting) gives a matrix characterization of strict $p$-negative type for a metric space of $p$-negative type.

\begin{Thm}\label{wolf}
Let $(X,d)$ be a finite metric space of $p$-negative type and $D_p$ an associated $p$-distance matrix. Then $(X,d)$ has strict $p$-negative type if and only if
\begin{enumerate}[(i)]
 \item $\det \bra{D_p} \neq 0$, and
 \item $\ip{D_p^{-1}\11}{\11}\neq 0$.
\end{enumerate}

\end{Thm}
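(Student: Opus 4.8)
The plan is to work entirely with the reformulation of $p$-negative type as semidefiniteness of the quadratic form $Q(\alpha):=\ip{D_p\alpha}{\alpha}$ on the hyperplane $\Pi_0$. Since $(X,d)$ is assumed to have $p$-negative type, $Q$ is already negative semidefinite on $\Pi_0$, so strict $p$-negative type is precisely the statement that $Q$ is negative \emph{definite} on $\Pi_0$; equivalently, that $\Pi_0$ contains no nonzero isotropic vector. First I would pin down these isotropic vectors. Applying the Cauchy--Schwarz inequality to the positive semidefinite form $-Q$ on $\Pi_0$, namely $\ip{D_p\alpha}{w}^2 \le \ip{D_p\alpha}{\alpha}\ip{D_p w}{w}$ for $\alpha,w\in\Pi_0$, shows that $Q(\alpha)=0$ forces $\ip{D_p\alpha}{w}=0$ for every $w\in\Pi_0$, i.e. $D_p\alpha\in\Pi_0^\perp=\sp\seq{\11}$. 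Conversely $D_p\alpha=\lambda\11$ with $\alpha\in\Pi_0$ gives $Q(\alpha)=\lambda\ip{\11}{\alpha}=0$. Hence strict $p$-negative type fails if and only if the system $D_p\alpha=\lambda\11$, $\ip{\alpha}{\11}=0$ admits a nonzero solution $\alpha$ for some scalar $\lambda$.

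With this reduction the invertible case is immediate. If $\det(D_p)\neq 0$ then any solution satisfies $\alpha=\lambda D_p^{-1}\11$, and the constraint $\ip{\alpha}{\11}=0$ becomes $\lambda\ip{D_p^{-1}\11}{\11}=0$; since $D_p^{-1}\11\neq 0$, a nonzero $\alpha$ exists precisely when $\ip{D_p^{-1}\11}{\11}=0$. Thus, on the region where (i) holds, strict $p$-negative type holds if and only if (ii) holds, and when (ii) fails the vector $\alpha=D_p^{-1}\11$ is an explicit nonzero isotropic vector in $\Pi_0$. This already establishes the theorem whenever $\det(D_p)\neq 0$, and in particular shows that conditions (i) and (ii) together are sufficient.

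It remains to show that condition (i) is necessary, i.e. that $\det(D_p)=0$ forces strict $p$-negative type to fail; this is the step I expect to be the main obstacle, since it cannot be read off from $D_p^{-1}$ and instead requires controlling the inertia of $D_p$. The plan here is a signature count. I would first show that a $p$-distance matrix of $p$-negative type has exactly one positive eigenvalue: the relation $\ip{D_p\11}{\11}=\sum_{i,j}d(x_i,x_j)^p>0$ produces at least one positive direction, while negative semidefiniteness of $Q$ on the codimension-one subspace $\Pi_0$ rules out any two-dimensional positive-definite subspace, since such a subspace would meet $\Pi_0$ nontrivially. Then, arguing by contradiction, if strict $p$-negative type held, $Q$ would be negative definite on the $(n-1)$-dimensional space $\Pi_0$, so $D_p$ would have at least $n-1$ negative eigenvalues, because the negative index cannot increase when passing from $\RR^n$ to a subspace. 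Together with the single positive eigenvalue this exhausts all $n$ eigenvalues as nonzero, contradicting $\det(D_p)=0$. The delicate points are establishing $n_+(D_p)=1$ cleanly and correctly invoking the monotonicity of the negative index under restriction to $\Pi_0$; by contrast the Cauchy--Schwarz step and the invertible case are routine linear algebra. (An alternative to the inertia count is to recognise the scalar $\det(D_p)\,\ip{D_p^{-1}\11}{\11}$ as $\pm$ the bordered determinant of $D_p$ augmented by a row and a column equal to $\11$ with a zero corner, via a Schur complement, and to analyse its vanishing; but I find the eigenvalue argument more transparent.)
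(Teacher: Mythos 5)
Your proposal is correct, but one key direction is argued by a genuinely different route than the paper's. The shared core is the Cauchy--Schwarz step: like the paper, you observe that under $p$-negative type a vector $\alpha\in\Pi_0$ with $\ip{D_p\alpha}{\alpha}=0$ must satisfy $D_p\alpha=\lambda\11$, and from there the invertible case ($\alpha=\lambda D_p^{-1}\11$, so $0=\ip{\alpha}{\11}=\lambda\ip{D_p^{-1}\11}{\11}$) is handled essentially identically to the paper's converse. Where you diverge is in proving that strict $p$-negative type forces $\det\bra{D_p}\neq 0$. The paper argues directly on a putative kernel vector $\alpha$: if $\alpha\notin\Pi_0$, it rescales $\alpha$ into $\Pi_1=\seq{x:\ip{x}{\11}=1}$ and compares with $w=\bra{\tfrac12,\tfrac12,0,\ldots,0}\in\Pi_1$; since $w-\alpha/\ip{\alpha}{\11}\in\Pi_0$ and $D_p\alpha=0$, negative type gives $\ip{D_pw}{w}\leq 0$, contradicting $\ip{D_pw}{w}=\tfrac12 d(x_1,x_2)^p>0$; hence $\ker D_p\subseteq\Pi_0$, where strictness kills it. You instead run an inertia count: negative definiteness on the $(n-1)$-dimensional subspace $\Pi_0$ forces at least $n-1$ negative eigenvalues, while $\ip{D_p\11}{\11}>0$ forces at least one positive eigenvalue, so no eigenvalue vanishes. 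Both arguments are sound, and your subspace-intersection justifications for the eigenvalue bounds are the standard ones, correctly invoked. The paper's argument is more elementary, using no spectral reasoning at all; yours is more structural and yields as a by-product the well-known fact that a distance matrix of negative type has exactly one positive eigenvalue, which is of independent interest. Note that both arguments (and the theorem itself, read literally) implicitly assume $\abs{X}\geq 2$: the paper needs two points to form $w$, and you need two points to conclude $\ip{D_p\11}{\11}>0$; this is a degenerate-case caveat, not a gap particular to your approach.
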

\begin{proof}
Recall that $\Pi_0 = \seq{\alpha \in \RR^n : \ip{\alpha}{\11}=0}$. We similarly define $\Pi_1 = \seq{\alpha \in \RR^n : \ip{\alpha}{\11}=1}$. \\
First, suppose that $(X,d)$ has strict $p$-negative type (i.e.,  $D_p$ is of strict negative type). We show that if $D_p \alpha = 0$, then necessarily $\alpha =0$, and so $\det \bra{D_p} \neq 0$. If $\alpha \notin \Pi_0$, then $\frac{\alpha}{\ip{\alpha}{\mathbbm{1}}} \in \Pi_1$, and as $w = \bra{\frac{1}{2},\frac{1}{2},0 \ldots, 0} \in \Pi_1$, we have
\[
w - \frac{\alpha}{\ip{\alpha}{\mathbbm{1}}} \in \Pi_0.
\]
Since $D_p$ is of negative type, it follows that
\[
\ip{D_p \bra{w - \frac{\alpha}{\ip{\alpha}{\mathbbm{1}}}}}{w - \frac{\alpha}{\ip{\alpha}{\mathbbm{1}}}} \leq 0.
\]
Since $D_p\alpha=0$ and $D_p$ is symmetric, expanding out the above gives $\ip{D_p w}{w} \leq 0$. This is a contradiction since $\ip{D_pw}{w} = \frac{1}{2}d(x_1,x_2)^p > 0$. Thus, we must have $\alpha \in \Pi_0$. Since $D_p\alpha=0$, we clearly have $\ip{D_p\alpha}{\alpha}=0$. But $D_p$ is of strict negative type, so we necessarily have $\alpha=0$. Thus, the only solution to $D_p\alpha=0$ is $\alpha=0$, and so $\det \bra{D_p} \neq 0$. Next, let $\beta = D_p^{-1}\11$. Suppose $\beta \in \Pi_0$; then $\ip{D_p\beta}{\beta} = \ip{\11}{\beta}=0$. But as $D_p$ is of strict negative type this implies $\beta=0$, which is clearly not the case. Thus $\beta \notin \Pi_0$, and so $\ip{D_p^{-1}\11}{\11} \neq 0$.

Conversely, suppose that $\det \bra{D_p} \neq 0$ and $\ip{D_p^{-1}\11}{\11} \neq 0$. Let $\alpha\in \Pi_0$ be such that $\ip{D_p\alpha}{\alpha}=0$. We wish to show that $x=0$. Since $D_p$ is of negative type, the symmetric bilinear form
\[
\bra{x,y} = -\ip{D_px}{y}
\]
defines a semi-inner-product on $\Pi_0$. By the Cauchy-Schwartz inequality, for any $\beta \in \Pi_0$ we have
\[
\abs{\ip{D_p\alpha}{\beta}}^2 \leq \abs{\ip{D_p\alpha}{\alpha}}\cdot \abs{\ip{D_p\beta}{\beta}} = 0,
\]
which means we must have $D_p\alpha = \lambda \11$ for some $\lambda \in \RR$. Since $\alpha \in \Pi_0$, we have
\[
0 = \ip{\alpha}{\11} = \lambda \ip{D_p^{-1}\11}{\11}.
\]
Since $\ip{D_p^{-1}\11}{\11} \neq 0$ (by assumption), we must have $\lambda =0$ and so $\alpha=0$. Thus, $D_p$ is of strict negative type, and so $(X,d)$ has strict $p$-negative type.

\end{proof}

Combining the above result with Theorems \ref{supremal} and \ref{strictthm} gives a method for computing $\wp(X)$ for a given finite metric space $(X,d)$. Indeed, by Theorem \ref{strict}, $(X,d)$ has strict $p$-negative type for all $p \in [0,\wp)$. By Theorem \ref{wolf}, this means that $\det\bra{D_p} \neq 0$ and $\ip{D_p^{-1}\11}{\11} \neq 0$ for all such $p \in [0,\wp)$. But, as noted above, $(X,d)$ does not have strict $\wp$-negative type, and so either $\det \bra{D_{\wp}} =0$, or $D_{\wp}^{-1}$ exists but $\ip{D_{\wp}^{-1}\11}{\11}=0$. Thus, we have the immediate corollary.

\begin{Cor}\label{value}
Let $(X,d)$ be a finite metric space. Then
\[
\wp \bra{X,d} = \min \seq{p \geq 0 : \det \bra{D_p} =0 \text{ or } \ip{D_p^{-1}\11}{\11}=0},
\]
where $D_p$ is an associated $p$-distance matrix for $(X,d)$.
\end{Cor}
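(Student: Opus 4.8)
The plan is to prove the two inequalities $\wp \geq \min S$ and $\wp \leq \min S$ separately, where $S = \seq{p \geq 0 : \det\bra{D_p} = 0 \text{ or } \ip{D_p^{-1}\11}{\11} = 0}$. Both directions amount to feeding the strictness dichotomy supplied by Theorems~\ref{supremal} and~\ref{strictthm} into Wolf's matrix criterion (Theorem~\ref{wolf}). The only point requiring care is that Theorem~\ref{wolf} takes ``$(X,d)$ is of $p$-negative type'' as a standing hypothesis, so at each value of $p$ I invoke it I must first know that the space actually has $p$-negative type; this is guaranteed on $[0,\wp)$ by the interval property and at $p=\wp$ by the remark that a finite supremal value is attained.

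First I would establish that $p \notin S$ for every $p \in [0,\wp)$, which yields $\min S \geq \wp$. Since $(X,d)$ is finite (and has at least two points) it has strictly positive supremal $p$-negative type, so $\wp > 0$ and in particular $(X,d)$ has $p$-negative type for some positive $p$. Theorem~\ref{strictthm} then upgrades this to \emph{strict} $q$-negative type for every $q \in [0,\wp)$. Fixing such a $q$, the space has $q$-negative type, so Theorem~\ref{wolf} applies, and strictness forces both $\det\bra{D_q} \neq 0$ and $\ip{D_q^{-1}\11}{\11} \neq 0$; that is, $q \notin S$.

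Next I would show $\wp \in S$, giving $\min S \leq \wp$ and, together with the previous step, pinning down $\wp = \min S$ exactly (so that the minimum is genuinely attained). Here I use that a finite $\wp$ is attained, so $(X,d)$ has $\wp$-negative type and Theorem~\ref{wolf} is again applicable at $p=\wp$. By Theorem~\ref{supremal} the supremal value cannot be strict, so $(X,d)$ does \emph{not} have strict $\wp$-negative type; the contrapositive of Theorem~\ref{wolf} then says at least one of its two conditions fails, i.e.\ $\det\bra{D_\wp} = 0$, or else $D_\wp$ is invertible but $\ip{D_\wp^{-1}\11}{\11} = 0$. In either case $\wp \in S$.

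I do not expect a serious obstacle, as the argument is essentially bookkeeping once the three cited theorems are in hand; the subtlety is purely one of matching hypotheses, namely ensuring $(X,d)$ has $p$-negative type before applying Theorem~\ref{wolf} at each relevant $p$. The one genuinely separate case is the degenerate one $\wp = \infty$ (for instance $X = K_n$, which has $p$-negative type for all $p \geq 0$): there $S$ is empty and the formula holds under the convention $\min \emptyset = \infty$, consistent with the first inequality above, which reads $\min S \geq \wp = \infty$.
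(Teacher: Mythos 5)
Your proposal is correct and follows essentially the same route as the paper: Theorem~\ref{strictthm} plus Theorem~\ref{wolf} rules out every $p \in [0,\wp)$ from the set, while attainment of a finite $\wp$, Theorem~\ref{supremal}, and the contrapositive of Theorem~\ref{wolf} place $\wp$ itself in the set. Your explicit handling of the degenerate case $\wp = \infty$ (via $\min \emptyset = \infty$) is a small point of care that the paper leaves implicit, but it does not change the argument.
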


The above allows for the direct calculation of $\wp$ for a given finite metric space. Indeed, $\det(D_p)$ is always a continuous function of $p$. The function $\ip{D_p^{-1}\11}{\11}$ is always at least piecewise continuous: whenever $\det(D_p) \neq 0$ it is continuous, and either has a removable discontinuity or blows up when $\det(D_p) =0$. Thus, the first zeroes of $\det(D_p)$ and $\ip{D_p^{-1}\11}{\11}$ can be found at least numerically. While Corollary~\ref{value} only deals with finite metric spaces, it may be used to numerically investigate $p$-negative type qualities of certain infinite metric spaces, see for example \cite{CDW11}. Unfortunately, since the process involves variable determinants and matrix inverses, the running time of a program to find $\wp\bra{X,d}$ for a metric space $\bra{X,d}$ with many points quickly becomes a problem.

The proof of Theorem \ref{wolf} also gives us a vector $\alpha \in \Pi_0$ such that $\ip{D_\wp \alpha}{\alpha} =0$. Equivalently, it gives a normalized $(s,t)$-simplex $[a_j\bra{m_j};b_i \bra{n_i}]_{s,t}$ in $X$ such that
\[
\sum_{1 \leq j_i < j_2 \leq s} m_{j_1} m_{j_2} d \bra{a_{j_1}, a_{j_2}}^\wp + \sum_{1 \leq i_1 < i_2 \leq t} n_{i_1} n_{i_2}d \bra{b_{i_1}, b_{i_2}}^\wp = \sum_{j,i = 1}^{s,t} m_j n_i d \bra{a_j, b_i}^\wp.
\]
(See \cite[Theorem 2.4]{DW08a} for an explicit proof of the equivalence.) The existence of such simplices was first shown in \cite[Corollary 4.4]{LW10}, though the proof was non-constructive.

\begin{Cor}\label{simplex}
Let $(X,d)$ be a finite metric space with $\wp(X,d)< \infty$, and $D_p$ be an associated $p$-distance matrix. Then:
\begin{enumerate}[ (i) ]
 \item If $\det \bra{D_{\wp}}=0$, then for any $\alpha \in \ker \bra{D_{\wp}}$ we have $\alpha \in \Pi_0$ and $\ip{D_{\wp}\alpha}{\alpha}=0$.
 \item If $\det \bra{D_{\wp}} \neq 0$  but $\ip{D_{\wp}^{-1}\11}{\11}=0$, then $D_{\wp}^{-1}\11 \in \Pi_0$ and $\ip{ D_{\wp} \bra{D_{\wp}^{-1}\11} }{D_{\wp}^{-1}\11}=0$.
\end{enumerate}
\end{Cor}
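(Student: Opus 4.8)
The plan is to prove each part as an essentially immediate consequence of Corollary~\ref{value} together with the structure already extracted in the proof of Theorem~\ref{wolf}. By Corollary~\ref{value}, since $\wp = \wp(X,d)$ is finite, $\wp$ is the smallest $p$ for which either $\det(D_p)=0$ or $\ip{D_p^{-1}\11}{\11}=0$; moreover the preceding discussion established that $(X,d)$ has $\wp$-negative type (so $D_\wp$ is of negative type), hence $\ip{D_\wp \beta}{\beta}\le 0$ for every $\beta\in\Pi_0$. These two facts are the only ingredients I expect to need.

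For part (i), suppose $\det(D_\wp)=0$ and let $\alpha\in\ker(D_\wp)$, so $D_\wp\alpha=0$. The conclusion $\ip{D_\wp\alpha}{\alpha}=0$ is then trivial. The real content is showing $\alpha\in\Pi_0$, i.e.\ $\ip{\alpha}{\11}=0$. Here I would reuse verbatim the argument from the first half of the proof of Theorem~\ref{wolf}: if $\alpha\notin\Pi_0$ then $\alpha/\ip{\alpha}{\11}\in\Pi_1$, and subtracting it from the fixed vector $w=(\tfrac12,\tfrac12,0,\dots,0)\in\Pi_1$ gives a vector in $\Pi_0$; expanding $\ip{D_\wp(w-\alpha/\ip{\alpha}{\11})}{w-\alpha/\ip{\alpha}{\11}}\le 0$ and using $D_\wp\alpha=0$ together with symmetry of $D_\wp$ collapses the inequality to $\ip{D_\wp w}{w}\le 0$, contradicting $\ip{D_\wp w}{w}=\tfrac12 d(x_1,x_2)^\wp>0$. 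Hence $\alpha\in\Pi_0$, as required.

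For part (ii), suppose $\det(D_\wp)\neq 0$ but $\ip{D_\wp^{-1}\11}{\11}=0$, and set $\beta=D_\wp^{-1}\11$. Then $\ip{\beta}{\11}=\ip{D_\wp^{-1}\11}{\11}=0$ gives $\beta\in\Pi_0$ immediately, and since $D_\wp\beta=\11$ we compute $\ip{D_\wp\beta}{\beta}=\ip{\11}{\beta}=\ip{\beta}{\11}=0$, which is exactly the claimed equality $\ip{D_\wp(D_\wp^{-1}\11)}{D_\wp^{-1}\11}=0$. This part is a one-line calculation once the definitions are unwound.

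I do not anticipate a serious obstacle: both statements are extractions of objects the proof of Theorem~\ref{wolf} already constructed, now read off at the specific exponent $p=\wp$ where one of the two defining conditions of Corollary~\ref{value} holds. The only point requiring genuine argument (rather than a definitional unwinding) is the membership $\alpha\in\Pi_0$ in part (i), and this is handled by importing the $w$-perturbation trick from Theorem~\ref{wolf}; one should be slightly careful that the relevant hypothesis there — namely that $D_\wp$ is of negative type — is indeed available, which it is because $(X,d)$ genuinely has $\wp$-negative type. With that in hand the proof is short.
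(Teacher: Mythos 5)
Your proof is correct and follows essentially the same route as the paper: part (ii) is a definitional unwinding, and part (i) is handled by invoking the $w$-perturbation argument from the proof of Theorem~\ref{wolf} (which only uses that $D_\wp$ is of negative type, available since $\wp(X,d)<\infty$ implies $(X,d)$ has $\wp$-negative type). The only difference is that you write out explicitly the steps the paper merely cites.
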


\begin{proof}
Part (ii) is self-evident. For part (i), we only need note that, since $D_\wp$ is of negative type, the argument used in the proof of Theorem \ref{wolf} shows that an element in the kernel of $D_{\alpha}$ must be an element of $\Pi_0$.
\end{proof}

Next, we consider some simple examples which demonstrate the different possibilities in Corollary~\ref{value}. That is, some finite metric spaces have $\det \bra{D_{\wp}}=0$ while others have $\ip{D_{\wp}^{-1}\11}{\11}=0$. While it is clearly not possible for both to be zero at $p = \wp$, the quantity $\ip{D_p^{-1}\11}{\11}$ may approach 0 along with $\det\bra{D_p}$ as $p \to \wp$ (as is the case for finite subsets of $\RR$).

\begin{Exam}\label{examgraph}
 Consider the following graph $G_1$ endowed with the path metric.
\begin{figure}[H]
  \centering
  \begin{tikzpicture}
\node[white,label=below:$v_1$] (1) at (0,0) {};
\node[white,label=right:$v_3$] (2) at (45:2cm) {};
\node[white,label=left:$v_4$] (3) at (135:2cm) {};
\node[white,label=left:$v_5$] (4) at (-135:2cm) {};
\node[white,label=right:$v_2$] (5) at (-45:2cm) {};
\draw (1) -- (2);
\draw (1) -- (3);
\draw (1) -- (4);
\draw (1) -- (5);
\draw (3) -- (4);
\end{tikzpicture}
  \caption{$G_1$}
\end{figure}
\noindent Then $\det(D_p) \neq 0$ for all $p \geq 0$ but $\ip{D_p^{-1}\11}{\11}=0$ at $p=\log_2 \bra{\frac{13+\sqrt{105}}{8}} = 1.538\ldots$. So, by Corollary~\ref{value}, we have $\wp\bra{G_1} = \log_2 \bra{\frac{13+\sqrt{105}}{8}}$. Applying Corollary~\ref{simplex} to find a vector $\alpha \in \Pi_0$ such that $\ip{D_{\wp}\alpha}{\alpha}=0$ gives
\[
\alpha = \bra{-1, \frac{\sqrt{105}-9}{4},\frac{\sqrt{105}-9}{4}, \frac{11-\sqrt{105}}{4},\frac{11-\sqrt{105}}{4}}.
\]
\end{Exam}

\begin{Exam}\label{examtree}
Consider the following simple graph $G_2$ endowed with the path metric.

\begin{figure}[H]
  \centering
  \begin{tikzpicture}
\node[white,label=-70:$v_1$] (1) at (0,0) {};
\node[white,label=left:$v_2$] (2) at (120:2cm) {};
\node[white,label=left:$v_3$] (3) at (-120:2cm) {};
\node[white,label=below:$v_4$] (4) at (2,0) {};
\node[white,label=below:$v_5$] (5) at (4,0) {};
\draw (1) -- (2);
\draw (1) -- (3);
\draw (1) -- (4);
\draw (4) -- (5);
\end{tikzpicture}
  \caption{$G_2$}
\end{figure}
\noindent Then, the smallest positive roots of $\det \bra{D_p}$ and $\ip{D_p^{-1}\11}{\11}$ are $p = 1.826\ldots$ and $p=1.576\ldots$ respectively, and so $\wp\bra{G_2} = 1.576\ldots$. In this case we have that $\wp$ is the smallest positive solution to
\[
4 \cdot 12^p - 4 \cdot 9^p - 7 \cdot 8^p + 8 \cdot 4^p + 8 \cdot 3^p -4 =0.
\]
We are doubtful that a more explicit expression for $\wp$ may be found. Using an approximate value for $\wp$ and applying Corollary~\ref{simplex} gives the (approximate) weight vector
\[
\alpha = \bra{-1.000,0.351,0.351,0.204,0.094}.
\]
\end{Exam}

\begin{Exam}\label{cycle}
Consider the 5-cycle $G_3$ endowed with the path metric.

\begin{figure}[H]
  \centering
  \begin{tikzpicture}
\node[white,label=right:$v_5$] (1) at (18:2cm) {};
\node[white,label=above:$v_1$] (2) at (90:2cm) {};
\node[white,label=left:$v_2$] (3) at (162:2cm) {};
\node[white,label=below:$v_3$] (4) at (234:2cm) {};
\node[white,label=below:$v_4$] (5) at (306:2cm) {};
\draw (1) -- (2);
\draw (1) -- (5);
\draw (2) -- (3);
\draw (3) -- (4);
\draw (4) -- (5);
\end{tikzpicture}
  \caption{$G_3$}
\end{figure}
\noindent Then, $\det\bra{D_p}=0$ at $p=\log_2 \bra{\frac{3+\sqrt{5}}{2}} = 1.388\ldots$ while $\ip{D_p^{-1}\11}{\11} \neq 0$ for all $p \geq 0$. Thus, $\wp \bra{G_3}= \log_2\bra{\frac{3+\sqrt{5}}{2}}$. In this case we have $\dim \ker (D_{\wp})=2$, though this multitude of vectors comes from the symmetries of $G_3$. One such vector is
\[
\alpha = \bra{0, \frac{3-\sqrt{5}}{2},-\frac{\sqrt{5}-1}{2},\frac{\sqrt{5}-1}{2},-\frac{3-\sqrt{5}}{2}}.
\]
\end{Exam}

The above examples also show that even a realtively `simple' metric space may have quite a convoluted supremal $p$-negative type

In \cite{DW08a}, Doust and Weston provide an algorithm for finding the essentially unique normalized $(s,t)$-simplex for a finite metric tree that maximizes the difference between the left- and right-hand sides of \eqref{roundness} at $p=1$. We note briefly note that, in general, these simplices are not the ones given by Corollary~\ref{simplex}. Indeed, the weightings given by the algorithm in \cite{DW08a} are always rational for trees endowed with the path metric, while the weights given in Example~\ref{examtree} are clearly not.

\begin{Rem}
When numerically calculating the supremal $p$-negative type of a particular finite metric space $\bra{X,d}$, it is important to consider $\det \bra{D_p}$ as well as $\ip{D_p^{-1}\11}{\11}$. While the latter is clearly not defined if $\det \bra{D_p} =0$, this may manifest itself as a removable discontinuity not detected when numerically finding its zeroes. This occurs in Example~\ref{cycle}, where $\det(D_p)=0$ at $p=\wp=\log_2 \bra{\frac{3+\sqrt{5}}{2}}$, but
\[
\lim_{p \to \wp} \ip{D_p^{-1}\11}{\11} = \frac{5}{5+\sqrt{5}}.
\]
\end{Rem}

\section{Some applications to path metric graphs}

In this section we give two applications of Corollary~\ref{value} to the supremal $p$-negative type of path metric graphs. We calculate the supremal $p$-negative type for the complete bipartite graphs, and then finish by proving a gap in the possible supremal $p$-negative type gaps of path metric graphs.

In complete bipartite graphs the only distances involved are 0, 1 and 2, and so the variable $p$ only appears in $\det \bra{D_p}$ and $\ip{D_p^{-1}\11}{\11}$ in the form $2^p$. So, in contrast to Example~\ref{examtree}, we are able to express $\wp$ explicitly.

\begin{Theorem}\label{bipartite}
Let $n,m$ be positive natural numbers not both equal to 1. Let $K_{n,m}$ be the complete bipartite graph on $n+m$ vertices $v_1, \ldots, v_{n+m}$ with vertex bipartition $V_{1}= \seq{v_1, \ldots, v_n}$,$V_2 = \seq{v_{n+1}, \ldots, v_{n+m}}$, endowed with the path metric. Then
\[
\wp \bra{K_{n,m}} = \log_2 \bra{ \frac{2nm}{2nm - n - m} }.
\]
\end{Theorem}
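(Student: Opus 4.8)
The plan is to apply Corollary~\ref{value} directly, which means computing both $\det\bra{D_p}$ and $\ip{D_p^{-1}\11}{\11}$ for $K_{n,m}$ and then locating the first value of $p$ at which one of them vanishes. In $K_{n,m}$ two vertices in the same part lie at distance $2$ and two vertices in opposite parts at distance $1$, so, writing $q = 2^p$, the $p$-distance matrix has the block form
\[
D_p = \begin{pmatrix} q\bra{J_n - I_n} & J \\ J^{T} & q\bra{J_m - I_m} \end{pmatrix},
\]
where $J_k$ and $I_k$ are the $k \times k$ all-ones and identity matrices and $J$ is the $n \times m$ all-ones matrix. The decisive feature, and the reason $\wp$ can be found explicitly here rather than (as in Example~\ref{examtree}) only numerically, is that $p$ enters solely through $q = 2^p$, so both quantities will be rational in $q$.

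The first key step is to exploit the bipartite symmetry to diagonalize $D_p$ by blocks. Any vector supported on $V_1$ (resp. $V_2$) whose entries sum to zero is annihilated by the all-ones blocks and satisfies $D_p\alpha = -q\alpha$; such vectors span an $\bra{n-1}+\bra{m-1}$-dimensional eigenspace for the eigenvalue $-q$. Its orthogonal complement is spanned by the indicator vectors $e_{V_1}, e_{V_2}$ of the two parts, and $D_p$ maps this two-dimensional subspace into itself, acting in the basis $\seq{e_{V_1}, e_{V_2}}$ as
\[
\tilde M = \begin{pmatrix} q\bra{n-1} & m \\ n & q\bra{m-1} \end{pmatrix}.
\]
Hence $\det\bra{D_p} = \bra{-q}^{n+m-2}\bra{q^2\bra{n-1}\bra{m-1} - nm}$. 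Since $\11 = e_{V_1} + e_{V_2}$ already lies in this invariant subspace, computing $\ip{D_p^{-1}\11}{\11}$ reduces to solving the $2\times2$ system $\tilde M\bra{x,y}^{T} = \bra{1,1}^{T}$ and forming $nx + my$ (the indicator vectors being orthogonal with squared norms $n$ and $m$). Cramer's rule then yields
\[
\ip{D_p^{-1}\11}{\11} = \frac{q\bra{2nm - n - m} - 2nm}{q^2\bra{n-1}\bra{m-1} - nm}.
\]

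With both expressions in hand, Corollary~\ref{value} reduces the problem to finding the smallest $q \geq 1$ at which a factor vanishes. The determinant vanishes (when $n,m\ge 2$) only at $q_2 = \sqrt{nm/\bra{\bra{n-1}\bra{m-1}}}$, while the numerator of $\ip{D_p^{-1}\11}{\11}$ vanishes at $q_1 = 2nm/\bra{2nm - n - m}$; taking $\log_2 q_1$ gives the claimed value. The main obstacle I expect is verifying that $q_1$ really is the relevant (smallest) root and that no earlier zero intrudes on $[1,q_1)$. The comparison $q_1 \le q_2$ should, after clearing denominators, collapse to the identity
\[
\bra{2nm - n - m}^2 - 4nm\bra{n-1}\bra{m-1} = \bra{n-m}^2 \geq 0,
\]
giving $q_1 < q_2$ when $n \neq m$ and $q_1 = q_2$ when $n = m$. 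To close the argument I would check signs on $[1,q_1)$: the denominator $q^2\bra{n-1}\bra{m-1} - nm$ is negative there (it first vanishes at $q_2 \ge q_1$) and the numerator $q\bra{2nm-n-m}-2nm$ is also negative there (it first vanishes at $q_1$, and $2nm-n-m>0$ since $n,m$ are not both $1$), so $\ip{D_p^{-1}\11}{\11}$ stays strictly positive and $\det\bra{D_p}$ nonzero throughout $[1,q_1)$. Finally I would dispose of the two edge cases: when $n=m$ the roots coincide and it is $\det\bra{D_p}$ that vanishes first, but the numerical value $\wp = \log_2 q_1$ is unaffected; and in the degenerate star case $m=1$ the factor $\bra{n-1}\bra{m-1}$ is zero, so $\det\bra{D_p}$ never vanishes and $\wp = \log_2 q_1$ comes purely from $\ip{D_p^{-1}\11}{\11}$.
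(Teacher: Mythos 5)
Your proof is correct, and it arrives at the same key rational function as the paper --- $\ip{D_p^{-1}\11}{\11} = \bra{\bra{2nm-n-m}q - 2nm}/\bra{\bra{n-1}\bra{m-1}q^2 - nm}$ with $q = 2^p$ --- but by a genuinely different linear-algebra route. The paper never computes $\det\bra{D_p}$ explicitly: it disposes of the determinant case by a kernel argument at $p = \wp$ (any kernel vector must be constant on each part, forcing $\wp = \log_2\bra{\tfrac{n}{n-1}}$ and $\wp = \log_2\bra{\tfrac{m}{m-1}}$ simultaneously, impossible unless $n = m$), and it obtains the inner product by writing down the full $(n+m)\times(n+m)$ inverse with five distinct entries $a,b,c,d,e$ and expanding $na + n(n-1)b + 2nmc + md + m(m-1)e$. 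You instead diagonalize by blocks: mean-zero vectors on each part are eigenvectors with eigenvalue $-q$ of multiplicity $n+m-2$, so all the action happens on the two-dimensional invariant subspace spanned by $e_{V_1}, e_{V_2}$, which yields both the closed form $\det\bra{D_p} = \bra{-q}^{n+m-2}\bra{q^2\bra{n-1}\bra{m-1}-nm}$ and, via Cramer's rule on $\tilde M$, the same inner product. Your comparison of roots through the identity $\bra{2nm-n-m}^2 - 4nm\bra{n-1}\bra{m-1} = \bra{n-m}^2$ then replaces the paper's kernel analysis, and your sign check on $[1,q_1)$ makes explicit the minimality that Corollary~\ref{value} demands (the paper leaves this implicit, relying on each of its two functions having a unique zero and applying the dichotomy at $p=\wp$ itself). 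What your route buys: no need to guess and then verify the form of $D_p^{-1}$, an explicit determinant formula, and the eigenstructure (hence the extremal vectors of Corollary~\ref{simplex}) essentially for free; what the paper's buys: a purely mechanical verification requiring no spectral reasoning. Your treatment of the edge cases --- $n=m$, where the roots coincide and the determinant vanishes first but the numerical value is unchanged, and the star case $m=1$, where the determinant never vanishes --- agrees in substance with the paper's conclusions.
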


The above result may be deduced from \cite[Example 2]{DM90}, though the proof is quite complicated. The case $m=1$, $n\geq 2$ was proven by Doust and Weston in \cite[Theorem 5.6]{DW08a} by finding a $(1,n)$-simplex for which \eqref{roundness} has equality for the smallest possible exponent. Theorem~\ref{bipartite} was also proven by Weston in \cite{Wes09} in the cases $\abs{n-m}=0,1$ using the method of Lagrange multipliers. Using Corollary~\ref{value} we provide a straightforward and self-contained proof below.

\begin{proof}
Using the given labelling, the associated $p$-distance matrix is given by
\[
\begin{array}{r@{\,}l}

    & 
      \begin{matrix}
       \mspace{15mu}\overbrace{\rule{2.65cm}{0pt}}^{n} &  \overbrace{\rule{2.65cm}{0pt}}^{m}
      \end{matrix}
      \\

D_p = & \begin{pmatrix}
         0      &  2^p   &  \cdots  &  2^p   &  1     &  \cdots & \cdots   &  1     \\
         2^p    & \ddots & \ddots   & \vdots & \vdots &         &          & \vdots \\
         \vdots & \ddots & \ddots   &  2^p   & \vdots &         &          & \vdots \\
         2^p    & \cdots & 2^p      & 0      & 1      & \cdots  &  \cdots  & 1      \\
         1      & \cdots & \cdots   & 1      & 0      & 2^p     & \cdots   & 2^p    \\
         \vdots &        &          & \vdots & 2^p    & \ddots  & \ddots   & \vdots  \\
         \vdots &        &          & \vdots & \vdots & \ddots  & \ddots   & 2^p     \\
         1      & \cdots & \cdots   & 1      & 2^p    & \cdots  & 2^p      & 0
        \end{pmatrix}.

\end{array}
\]
Let $\wp$ be the supremal $p$-negative type of $K_{n,m}$. Suppose that $\det \bra{D_{\wp}} =0$. Then, there is some non-zero $\alpha \in \Pi_0$ such that $D_{\wp} \alpha =0$. By writing out $D_{\wp}\alpha$ we see that $\alpha$ must satisfy
\begin{equation}\label{n}
2^\wp \sum_{i=1}^n \alpha_i + \sum_{i=n+1}^{n+m} \alpha_i = 2^\wp \alpha_1 = \cdots = 2^\wp \alpha_{n}
\end{equation}
and
\begin{equation}\label{m}
\sum_{i=1}^n \alpha_i + 2^\wp \sum_{i=n+1}^{n+m} \alpha_i = 2^\wp \alpha_{n+1} = \cdots = 2^\wp \alpha_{n+m}.
\end{equation}
So $\alpha_1 = \cdots = \alpha_n$ and $\alpha_{n+1} = \cdots = \alpha_{n+m}$. It is clear that $\alpha_1 \neq0$, as otherwise we would have $\alpha=0$. Rescaling if necessary we may assume $\alpha_1 = \frac{1}{n}$. Using the fact that $\alpha \in \Pi_0$, we see that we then have $\alpha_{n+1}= - \frac{1}{m}$. Then equations \eqref{n} and \eqref{m} imply that
\[
2^\wp -1 = \frac{2^\wp}{n} \quad \Rightarrow  \quad \wp = \log_2 \bra{\frac{n}{n-1}}
\]
and
\[
1-2^\wp = - \frac{2^\wp}{m} \quad  \Rightarrow \quad \wp = \log_2 \bra{\frac{m}{m-1} }.
\]
If $n \neq m$ then this is clearly a contradiction, in which case $\wp$ must be given instead by the first zero of $\ip{D_p^{-1}\11}{\11}$. We shall see that in the case $n=m$ the above does actually give $\wp$.

One can see that the inverse of $D_p$, when it exists, is given by

\[
\begin{array}{r@{\,}l}

    & 
      \begin{matrix}
       \mspace{15mu}\overbrace{\rule{2.35cm}{0pt}}^{n} &  \overbrace{\rule{2.35cm}{0pt}}^{m}
      \end{matrix}
      \\

D_p^{-1} = & \begin{pmatrix}
         a      &  b     &  \cdots  &  b     &  c     &  \cdots & \cdots   &  c      \\
         b      & \ddots & \ddots   & \vdots & \vdots &         &          & \vdots  \\
         \vdots & \ddots & \ddots   &  b     & \vdots &         &          & \vdots  \\
         b      & \cdots & b        & a      & c      & \cdots  &  \cdots  & c       \\
         c      & \cdots & \cdots   & c      & d      & e       & \cdots   & e       \\
         \vdots &        &          & \vdots & e      & \ddots  & \ddots   & \vdots  \\
         \vdots &        &          & \vdots & \vdots & \ddots  & \ddots   & e       \\
         c      & \cdots & \cdots   & c      & e      & \cdots  & e        & d
        \end{pmatrix},

\end{array}
\]
where
\[
\begin{array}{ccc}
a & = & {\displaystyle \frac{\bra{1-m}\bra{n-2}\bra{2^p}^2+m\bra{n-1}}{2^p \bra{\bra{n-1}\bra{m-1} \bra{2^p}^2 -nm } }},\\
b & = & {\displaystyle \frac{\bra{m-1}\bra{2^p}^2 - m}{2^p \bra{\bra{n-1}\bra{m-1} \bra{2^p}^2 -nm } }}, \\
c & = & {\displaystyle \frac{-2^p}{2^p \bra{\bra{n-1}\bra{m-1} \bra{2^p}^2 -nm } }   },\\
d & = & {\displaystyle \frac{\bra{1-n}\bra{m-2}\bra{2^p}^2+n\bra{m-1}}{2^p \bra{\bra{n-1}\bra{m-1} \bra{2^p}^2 -nm } }  },\\
e & = & {\displaystyle \frac{\bra{n-1}\bra{2^p}^2 - n}{2^p \bra{\bra{n-1}\bra{m-1} \bra{2^p}^2 -nm } }}.
\end{array}
\]
Thus we see that
\begin{align*}
\ip{D_{p}^{-1}\11}{\11} & = na + n(n-1)b + 2nmc + md + m(m-1)e\\
& = \frac{ \bra{2nm-n-m}2^p-2nm }{(n-1)(m-1)\bra{2^p}^2-nm}.
\end{align*}
So $\ip{D_{p}^{-1}\11}{\11} =0$ precisely when
\begin{equation}\label{nm}
p = \log_2 \bra{\frac{2nm}{2nm-n-m}}.
\end{equation}
So, for $n \neq m$ we have $\wp \bra{K_{n,m}} = \log_2 \bra{ \frac{2nm}{2nm-n-m}}$. If $n=m$, then \eqref{nm} simplifies to
\[
p = \log_2 \bra{\frac{n}{n-1} }.
\]
However, for this value of $p$ we have $(n-1)(m-1) \bra{2^p}^2 -nm =0$, and so $D_{p}^{-1}$ does not even exist. So, if $n=m$ we never have $\ip{D_p^{-1}\11}{\11}=0$, and so by the above we have $\wp = \log_2 \bra{\frac{n}{n-1} }$. Thus, for all $n,m$ positive integers both not equal to 1, we have
\[
\wp \bra{K_{n,m}} = \log_2 \bra{\frac{2nm}{2nm-n-m} }. \qedhere
\]
\end{proof}

It is straightforward to see that when Corollary~\ref{simplex} is applied to Theorem~\ref{bipartite}, the vector $\alpha$ such that $\ip{D_{\wp}\alpha}{\alpha}=0$ is given by
\[
\begin{array}{r@{\,}l}
    & 
      \begin{matrix}
       \mspace{15mu}\overbrace{\rule{1.4cm}{0pt}}^{n} & \!\!\! \overbrace{\rule{2.06cm}{0pt}}^{m}
      \end{matrix}
      \\

&\bra{ {\displaystyle \frac{1}{n}, \cdots, \frac{1}{n}, - \frac{1}{m}, \cdots, - \frac{1}{m} }}.
\end{array}
\]
The vector $\alpha$ is equivalent to the normalized $(n,m)$-simplex where the first $n$ vertices are $v_1, \ldots v_n$ each with weighting $\frac{1}{n}$, and the remaining $m$ vertices $v_{n+1},\ldots, v_{n+m}$ each with weighting $\frac{1}{m}$. So, for the complete bipartite graphs $K_{n,m}$, the extremal normalized simplex is exactly as one would imagine.

Finally, we show the existence of a nontrivial `gap' in the possible supremal $p$-negative type values for path metric graphs. An analogous result in terms of roundness was shown in \cite{HLMRR10}.

\begin{Thm}\label{gap}
If $G$ is a connected path metric graph, then $\wp(G) \notin \bra{\log_2 \bra{2 + \sqrt{3}},2}$.
\end{Thm}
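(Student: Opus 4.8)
The plan is to recast the statement as a dichotomy at the single exponent $p=2$. By the interval property together with the attainment of $\wp$ for finite spaces (noted in the introduction), a finite metric space has $p$-negative type exactly for $p \in [0,\wp]$; hence $\wp(G) \ge 2$ if and only if $G$ has $2$-negative type. Consequently the theorem is equivalent to the implication: \emph{if $G$ fails to have $2$-negative type, then $\wp(G) \le \log_2(2+\sqrt3)$.} So I would assume $G$ does not have $2$-negative type and try to show $G$ fails $p$-negative type for every $p > \log_2(2+\sqrt3)$.

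The core of the argument is the diameter-$2$ case, where only the distances $1$ and $2$ occur. Writing $A$ for the adjacency matrix of the distance-$1$ graph, one has $D_p = A + 2^p(J-I-A)$, and since $J$ annihilates $\Pi_0$, for $\alpha \in \Pi_0$ the form becomes $\langle D_p\alpha,\alpha\rangle = (1-2^p)\langle A\alpha,\alpha\rangle - 2^p\|\alpha\|^2$. Because $1-2^p<0$, maximising over unit $\alpha \in \Pi_0$ reduces to minimising the Rayleigh quotient of $A$ on $\mathbf1^\perp$; letting $\mu$ be that minimum (the least eigenvalue of the compression of $A$ to $\Pi_0$), a short computation gives, whenever $\mu < -1$,
\[
\wp = \log_2\!\left(\frac{\mu}{\mu+1}\right).
\]
Under this formula the forbidden interval $(\log_2(2+\sqrt3),2)$ corresponds exactly to $\mu \in \bigl(-\tfrac{1+\sqrt3}{2},-\tfrac43\bigr)$, and the endpoints are realised by the two extremal configurations I would use for sharpness: the cherry $P_3=K_{1,2}$ has $\mu=-\tfrac43$ and $\wp=2$, while the paw (a triangle with one pendant edge) has $\mu=-\tfrac{1+\sqrt3}{2}$ and $\wp=\log_2(2+\sqrt3)$.

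Thus for diameter-$2$ spaces the theorem becomes a spectral gap statement: no graph has $\mu \in \bigl(-\tfrac{1+\sqrt3}{2},-\tfrac43\bigr)$. I would prove this by taking a vector $\alpha$ attaining $\mu<-\tfrac43$, analysing its support to extract a small induced configuration, and checking via Corollary~\ref{value} that any such configuration already forces $\mu \le -\tfrac{1+\sqrt3}{2}$ (equivalently $\wp \le \log_2(2+\sqrt3)$); the paw is the extremal case one is driven to. This is the step I expect to be most delicate, since the two bounds $-\tfrac{1+\sqrt3}{2}\approx-1.366$ and $-\tfrac43\approx-1.333$ are very close, so the argument must be tight.

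The main obstacle is the passage to graphs of diameter $\ge 3$, where distances $\ge 3$ genuinely enter $D_p$ (for instance $C_6$ has $\wp=1$ because of a distance-$3$ contribution $3^p$, and it possesses no diameter-$2$ subspace that fails $2$-negative type, so one cannot simply restrict to a diameter-$2$ subset and invoke heredity). To handle this I would either show that failure of $2$-negative type can always be localised to produce the needed small configuration directly, or treat the remaining highly symmetric families (such as cycles, whose $D_p$ is circulant and can be diagonalised explicitly) by direct computation through Corollary~\ref{value}, verifying in each case that the first zero of $\det(D_p)$ or of $\langle D_p^{-1}\mathbf1,\mathbf1\rangle$ occurs at an exponent lying outside the gap.
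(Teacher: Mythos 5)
Your setup is correct and genuinely different in flavour from the paper's: the reduction ``$\wp(G)\ge 2$ iff $G$ has $2$-negative type'' is valid, the identity $\ip{D_p\alpha}{\alpha} = (1-2^p)\ip{A\alpha}{\alpha} - 2^p\norm{\alpha}^2$ for $\alpha \in \Pi_0$ on diameter-$2$ graphs is right, and so are the formula $\wp = \log_2\bra{\mu/(\mu+1)}$ (for $\mu < -1$) and the endpoint correspondences: $P_3 = K_{1,2}$ gives $\mu = -4/3$, $\wp = 2$, and the paw gives $\mu = -(1+\sqrt3)/2$, $\wp = \log_2\bra{2+\sqrt3}$. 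But the proposal stops exactly where the theorem's content begins, in two places. First, the diameter-$2$ spectral gap --- that no graph has $\mu \in \bra{-(1+\sqrt3)/2, -4/3}$ --- is not proved: ``take a vector attaining $\mu$, analyse its support, extract a small induced configuration'' is a restatement of the problem, not an argument, and you flag it yourself as the delicate step. Second, and more seriously, the diameter $\ge 3$ case is left to two undeveloped alternatives, and neither can be completed as stated: the localisation alternative is blocked by your own $C_6$ observation (a correct observation --- $C_6$ fails $2$-negative type yet every diameter-$2$ subspace of it has $2$-negative type), while the ``highly symmetric families'' alternative fails because a graph of diameter $\ge 3$ that fails $2$-negative type need not be a cycle or symmetric at all, and there is no classification of such graphs over which to run a computation. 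As it stands this is an outline missing both essential steps.

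The paper's proof supplies the idea your plan lacks: it never localises the \emph{failure of $2$-negative type}; it localises \emph{graph structure}. Every connected graph either has maximum degree $\le 2$ --- hence is a path ($\wp = 2$), an even cycle ($\wp = 1$), or an odd cycle ($\wp \le \log_2 3$, via the explicit $(2,2)$-simplex of Lemma~\ref{cyclelemma}) --- or has a vertex $v_1$ of degree $\ge 3$. In the latter case, if $G$ is not complete, then $v_1$ together with three suitable neighbours spans a metrically embedded copy of $K_{1,3}$, the paw $H_2$, or the diamond $H_3$ (iterating through the neighbourhood when the chosen neighbours form a clique), whose supremal types are $\log_2 3$, $\log_2\bra{2+\sqrt3}$, $\log_2 3$ by Corollary~\ref{value} and Theorem~\ref{bipartite}; heredity of $\wp$ under metric embedding then gives $\wp(G) \le \log_2\bra{2+\sqrt3}$. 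Because these configurations consist of a vertex and its neighbours, any two non-adjacent points in them are at distance exactly $2$ in $G$, so they are metrically embedded \emph{whatever} the diameter of $G$ --- which is precisely how the paper walks around the $C_6$-type obstacle that blocks your reformulation. If you want to salvage your spectral approach, you would still need an argument of exactly this structural kind to prove the $\mu$-gap, plus the degree dichotomy to dispose of large-diameter graphs.
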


The idea of the proof it to show that if $G$ is not complete, a path or a cycle, then we can find a metrically embedded subgraph in $G$ with supremal $p$-negative type at most $\log_2 \bra{2 + \sqrt{3}}$. We deal with one case first in a lemma.

\begin{Lem}\label{cyclelemma}
If $G$ is a cyclic graph on $2n+1$ vertices ($n \geq 2$) endowed with the path metric, then $\wp(G) \leq \log_2 3$.
\end{Lem}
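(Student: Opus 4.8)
The plan is to exhibit a single explicit normalized $(2,2)$-simplex (equivalently a weight vector $\alpha \in \Pi_0$) whose associated quadratic form $\ip{D_p\alpha}{\alpha}$ is nonnegative at $p = \log_2 3$, and then invoke the interval property together with Theorem~\ref{strictthm} to conclude $\wp(G) \le \log_2 3$. Since a cycle is vertex-transitive, and since the labelling is immaterial, I may label the vertices $v_0, \ldots, v_{2n}$ cyclically so that $d(v_i, v_j) = \min\bra{\abs{i - j},\, 2n + 1 - \abs{i - j}}$, and work with the two pairs $\seq{v_0, v_n}$ and $\seq{v_1, v_{n+1}}$, each an ``antipodal'' pair at distance $n$. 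These four points are distinct precisely because $n \ge 2$.

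First I would record the relevant distances: $d(v_0, v_n) = d(v_1, v_{n+1}) = d(v_0, v_{n+1}) = n$, while $d(v_n, v_1) = n - 1$ and $d(v_0, v_1) = d(v_n, v_{n+1}) = 1$. Taking $\alpha$ supported on these four points with $\alpha_0 = \alpha_n = \tfrac12$ and $\alpha_1 = \alpha_{n+1} = -\tfrac12$ (so that $\alpha \in \Pi_0$), a direct expansion of $\ip{D_p\alpha}{\alpha} = \sum_{i,j} d(v_i,v_j)^p \alpha_i\alpha_j$ collapses, after the two distance-$1$ cross terms and the two distance-$n$ terms cancel in the expected way, to
\[
\ip{D_p\alpha}{\alpha} = \tfrac12\bra{ n^p - (n-1)^p - 2 }.
\]
Thus the whole statement reduces to the elementary inequality $n^q - (n-1)^q \ge 2$ at $q = \log_2 3$.

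To prove this I would set $g(x) = x^q - (x-1)^q$ on $[2,\infty)$ and note that $g(2) = 2^q - 1 = 3 - 1 = 2$, while $g'(x) = q\bra{x^{q-1} - (x-1)^{q-1}} > 0$, since $q - 1 = \log_2(3/2) > 0$ makes $t \mapsto t^{q-1}$ strictly increasing. Hence $g(n) \ge g(2) = 2$ for every integer $n \ge 2$, with equality exactly at $n = 2$. Consequently $\ip{D_q\alpha}{\alpha} \ge 0$: for $n \ge 3$ it is strictly positive, so $G$ fails to satisfy \eqref{neg} at $p = q$, giving $\wp(G) < q$; for $n = 2$ it vanishes, so $G$ fails to have \emph{strict} $q$-negative type, whence by Theorem~\ref{strictthm} $G$ cannot have $p$-negative type for any $p > q$, again yielding $\wp(G) \le q = \log_2 3$.

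The expansion and the monotonicity argument are both routine, so there is no serious obstacle; the one point requiring care is the degenerate case $n = 2$ (the $5$-cycle), where the chosen simplex produces equality in \eqref{roundness} rather than a strict violation, and it is exactly there that Theorem~\ref{strictthm} — rather than a bare failure of \eqref{neg} — is what pushes the conclusion through. I would also verify at the outset that the four chosen vertices are genuinely distinct and that the listed shortest-path distances are correct for $C_5$, where several distances are realised on the opposite arc (e.g.\ $d(v_0, v_{n+1}) = n$ rather than $n+1$).
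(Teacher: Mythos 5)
Your proposal is correct and takes essentially the same approach as the paper: the identical $(2,2)$-simplex on two near-antipodal pairs with weights $\pm\tfrac12$, reducing the lemma to the elementary inequality $n^p - (n-1)^p \le 2$. The only difference is in the endgame --- the paper shows the negative-type inequality fails (strictly, even for $n=2$) at every $p > \log_2 3$, whereas you evaluate exactly at $p = \log_2 3$ and dispose of the equality case $n = 2$ via Theorem~\ref{strictthm}; both routes are valid.
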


We note that while the above bound is in no way optimal, it is strong enough for our purposes. (Indeed, it follows from \cite[Corollary 3.2]{HLMRR10} that $\wp$ tends to $1$ as $n \to \infty$.)

\begin{proof}
 Label the vertices of $G$ by $v_1, \ldots, v_{2n+1}$ in cyclic order. Consider the normalized $(2,2)$-simplex $[a_j \bra{m_j};b_i \bra{n_i}]_{2,2}$ in $G$ given by
\[
m_1 = m_2 = n_1 = n_2 = \frac{1}{2} \quad \text{and} \quad a_1 = v_1, \quad b_1 = v_2, \quad a_2 = v_n, \quad b_2 = v_{n+1}.
\]
For this simplex, inequality \eqref{roundness} becomes
\[
\frac{1}{4}n^2 + \frac{1}{4}n^2 \leq \frac{1}{4}1^p + \frac{1}{4}n^p + \frac{1}{4}1^p + \frac{1}{4} \bra{n-1}^p,
\]
or more simply,
\begin{equation}\label{cyclebound}
 \bra{n-1}^p - n^p + 2 \geq 0.
\end{equation}
We show that \eqref{cyclebound} fails for $p > \log_2 3$ for all $n \geq 2$.

Set $f_n(p) = (n-1)^p - n^p +2$. Then $f_n(0) = 2$ for all $n$. Also note that since $g(x)=x^p$ increases at a decreasing rate for $0< p <1$, we have $f_n(p) > 1$ for all $0 < p <1$. We also note that since $g_p(x) = x^p$, increases at an increasing rate for $p > 1$, we have
\[
m^p - (m-1)^p > n^p - (n-1)^p
\]
for all $m>n$, whenever $p > 1$. This implies that $f_m(p) < f_n(p)$ for all $m > n$ and $p > 1$. Since $f_2(p)<0$ for all $p > \log_2 3$, we see that for all $n \geq 2$, $f_n(p)$ is negative for some $p \leq \log_2 3$. Thus we have $\wp(G) \leq \log_2 3$. 
\end{proof}

We are now able to prove Theorem~\ref{gap}.

\begin{proof}[Proof of Theorem~\ref{gap}]
As the supremal $p$-negative type of an infinite metric space is the infimum of the supremal $p$-negative types of its metrically embedded finite subspaces, it is enough to consider only finite graphs. Let $G$ be a connected graph on $n$ vertices. By considering all possible cases, we show that $\wp(G) \notin \bra{\log_2 \bra{2 + \sqrt{3}},2}$.

If $n=1$, then \eqref{roundness} is vacuously satisfied, and so in this case we have $\wp(G)=\infty$.

For $n=2$ or $3$, the only possibilities are displayed below. Their supremal $p$-negative types are $\infty$, $2$ and $\infty$ respectively.
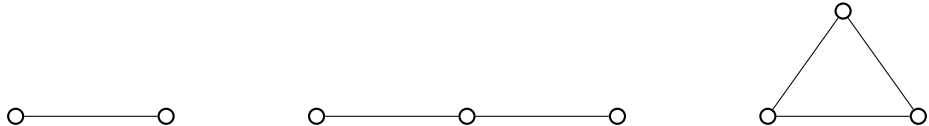
\begin{figure}[H]
  \centering
  \begin{tikzpicture}
\node[white] (1) at (0,0) {};
\node[white] (2) at (2,0) {};
\node[white] (3) at (4,0) {};
\node[white] (4) at (6,0) {};
\node[white] (5) at (8,0) {};
\node[white] (6) at (10,0) {};
\node[white] (7) at (12,0) {};
\node[white] (8) at (11,1.4) {};
\draw (1) -- (2);
\draw (3) -- (4);
\draw (4) -- (5);
\draw (6) -- (7);
\draw (6) -- (8);
\draw (7) -- (8);
\end{tikzpicture}
  \caption{Connected graphs on 2 or 3 vertices}
\end{figure}
From now on we assume that $n \geq 4$. There are two cases to consider, depending the on the maximum degree of $G$.

If the maximum degree of $G$ is 2, then $G$ is either a path of length $n-1$, or a cycle on $n$ vertices. If $G$ is a path, then it is easy to see that $\wp(G) =2$. If $G$ is a cyclic graph with $n$ even, then $\wp(G)=1$, since $G$ is embeddable in a sphere and contains two pairs of antipodal points (see \cite{HLMT98}). If $G$ is a cyclic graph with $n$ odd, then by Lemma~\ref{cyclelemma} we have $\wp(G)\leq \log_2 3 < \log_2 \bra{2+\sqrt{3}}$.

Suppose the maximum degree of $G$ is at least 3. If $G$ is a complete graph, then $\wp(G)=\infty$. So, from now on, we assume that $G$ is not complete. Since $G$ is connected, it must contain the subgraph $H_1$, displayed below. There are several cases to consider, depending on whether $v_2v_3$, $v_3v_4$ or $v_2v_4$ are edges in $G$.
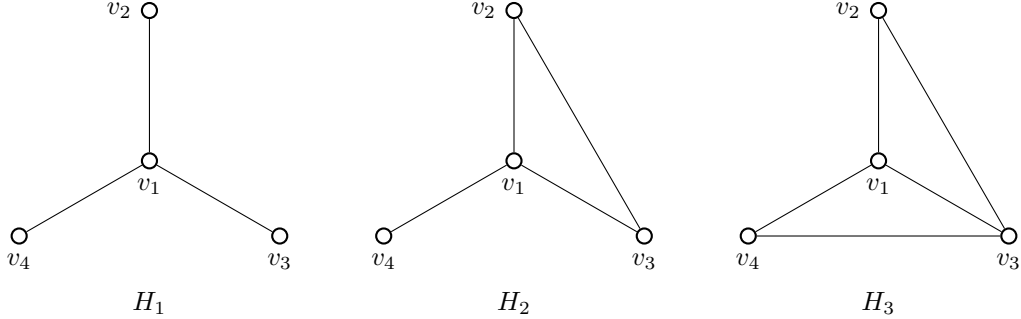
\begin{figure}[H]
  \centering
  \begin{tikzpicture}
\node[white,label=below:$v_1$] (1) at (0,0) {};
\node[white,label=left:$v_2$] (2) at (90:2cm) {};
\node[white,label=below:$v_3$] (3) at (-30:2cm) {};
\node[white,label=below:$v_4$] (4) at (-150:2cm) {};
\node[blank,label=below:$H_1$] (5) at (0,-1.5) {};
\draw (1) -- (2);
\draw (1) -- (3);
\draw (1) -- (4);
\end{tikzpicture}
\quad \quad
  \begin{tikzpicture}
\node[white,label=below:$v_1$] (1) at (0,0) {};
\node[white,label=left:$v_2$] (2) at (90:2cm) {};
\node[white,label=below:$v_3$] (3) at (-30:2cm) {};
\node[white,label=below:$v_4$] (4) at (-150:2cm) {};
\node[blank,label=below:$H_2$] (5) at (0,-1.5) {};
\draw (1) -- (2);
\draw (1) -- (3);
\draw (1) -- (4);
\draw (2) -- (3);
\end{tikzpicture}
\quad \quad
  \begin{tikzpicture}
\node[white,label=below:$v_1$] (1) at (0,0) {};
\node[white,label=left:$v_2$] (2) at (90:2cm) {};
\node[white,label=below:$v_3$] (3) at (-30:2cm) {};
\node[white,label=below:$v_4$] (4) at (-150:2cm) {};
\node[blank,label=below:$H_3$] (5) at (0,-1.5) {};
\draw (1) -- (2);
\draw (1) -- (3);
\draw (1) -- (4);
\draw (2) -- (3);
\draw (3) -- (4);
\end{tikzpicture}
\caption{Possible metrically embedded subgraphs in $G$}
\end{figure}
Suppose that none of $v_2v_3$, $v_3v_4$, $v_2v_4$ are edges in $G$. Then the graph $H_1$ is metrically embedded in $G$, and so $\wp(G) \leq \wp(H_1)$. By Theorem~\ref{bipartite}, we have $\wp\bra{H_1} = \log_2 3$. So, in this case, we have $\wp(G) \leq \log_2 3 < \log_2 \bra{2+\sqrt{3}}$.

Now suppose that exactly one of the possible edges $v_2v_3$, $v_3v_4$, $v_2v_4$ is present in $G$. Without loss of generality, let it be $v_2v_3$. Then, $H_2$ is metrically embedded in $G$. Using Corollary~\ref{value} it is straightforward to see that $\wp\bra{H_2} = \log_2 \bra{2+\sqrt{3}}$. So, in this case, we have $\wp(G) \leq \log_2 \bra{2 + \sqrt{3}}$.

Now suppose that exactly 2 of the possible edges $v_2v_3$, $v_3v_4$, $v_2v_4$ are present in $G$. Without loss of generality, let them be $v_2v_2$ and $v_3v_4$. Then $H_3$ is metrically embedded in $G$. Using Corollary~\ref{value} it is straightforward to see that $\wp \bra{H_3} = \log_2 3$. So, in this case, we have that $\wp(G) \leq \log_2 3 < \log_2 \bra{2+ \sqrt{3}}$.

If all three of the possible edges $v_2v_3$, $v_3v_4$, $v_2v_4$ are present in $G$, then, since $G$ is not a complete graph, there must be another vertex $v_5$ of $G$ adjacent to, say, $v_1$. It then follows to consider the possible edges $v_2v_5$, $v_3v_5$, $v_4v_5$.
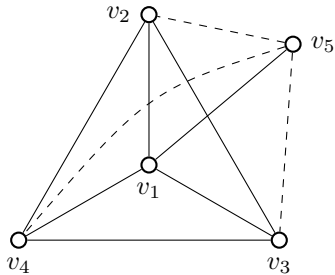
\begin{figure}[H]
  \centering
  \begin{tikzpicture}
\node[white,label=below:$v_1$] (1) at (0,0) {};
\node[white,label=left:$v_2$] (2) at (90:2cm) {};
\node[white,label=below:$v_3$] (3) at (-30:2cm) {};
\node[white,label=below:$v_4$] (4) at (-150:2cm) {};
\node[white,label=right:$v_5$] (5) at (40:2.5cm) {};
\draw (1) -- (2);
\draw (1) -- (3);
\draw (1) -- (4);
\draw (2) -- (3);
\draw (2) -- (4);
\draw (3) -- (4);
\draw (5) -- (1);
\draw[dashed] (5) -- (2);
\draw[dashed] (5) -- (3);
\draw[dashed] (5) .. controls (0,1) and (0,1) .. (4);

\end{tikzpicture}
  \caption{Possible edges to $v_5$}
\end{figure}
\noindent If any of the possible edges $v_2v_5$, $v_3v_5$, $v_4v_5$ are \emph{not} present, then it is clear that at least one of $H_2$ and $H_3$ is metrically embedded in $G$. So, by the above, we must have $\wp(G) \leq \log_2 \bra{2 + \sqrt{3}}$. If all of the edges $v_2v_5$, $v_3v_5$ and $v_4v_5$ are present, then, since $G$ is not complete, there must exist \emph{another} vertex $v_6$, adjacent to say $v_1$, whose possible edges we then consider. This process must eventually terminate as $G$ is finite and not complete. So, eventually, an embedded copy of $H_1$, $H_2$ or $H_3$ must be found in $G$. Thus, we must have $\wp(G) \leq \log_2 \bra{2 + \sqrt{3}}$.

So, in all possible cases, we either have $\wp(G) \leq \log_2 \bra{2 + \sqrt{3}}$ or $\wp(G) \geq 2$.

\end{proof}

\bibliographystyle{plain}
\bibliography{/c/z3206935/hdrive/Bibliography/bibliography.bib}

\end{document}